\newcommand{\be}{\begin{eqnarray*}}
	\newcommand{\en}{\end{eqnarray*}}
\newcommand{\bes}{\begin{eqnarray}}
\newcommand{\ens}{\end{eqnarray}}
\def\nn{\nonumber}
\newcommand{\al}{\alpha}
\newcommand{\ep}{\epsilon}
\newtheorem{theorem}{Theorem}[section]
\newtheorem{lemma}{Lemma}[section]
\newtheorem{remark}{Remark}[section]
\def\bq{\begin{equation}}
\def\eq{\end{equation}}
\def\bqq{\begin{eqnarray*}}
	\def\eqq{\end{eqnarray*}}
\def\nn{\nonumber}
\title{\bf Inverse source problem for time fractional diffusion with discrete  random noise   }
\author{ Tuan Nguyen Huy $^1$, Erkan Nane $^2$  \footnote{Corresponding author: \url{ezn0001@auburn.edu }} \\\\
	\small $^{1}$Faculty of Maths and Computer Science, University of Science,\\
	\small Vietnam National University, 227 Nguyen Van Cu, Dist.5, HoChiMinh City, VietNam \\
	\small $^2$ Department of Mathematics and Statistics, Auburn University, Auburn, USA  \\ \\
}
\begin{document}
	\date{}
	\maketitle
	
	\begin{abstract}
		In this paper, we  deal with  the inverse source problem of determining a  source in a time fractional diffusion equation where data are given at a fixed time.
		This problem is ill-posed, i.e., the solution  does not depend continuously on the data.   To regularize the instable solution, we use the trigonometric method in nonparametric
		regression associated with the truncated expansion method. We also investigate the convergence
		rate.
	\end{abstract}

		%\nocite{*}

\section{Introduction}

In this work, we focus on an inverse problem for the following time-fractional diffusion equation with a source:

\bes\label{x1}
\left\{\begin{gathered}
\frac{\partial^\al u }{\partial t}-\frac{\partial^2 u }{\partial x^2} = F(x,t),~~(x,t) \in (0,\pi)  \times (0,T) \hfill\\
u_x(0,t) =u_x(\pi,t)=  0,~~  \hfill\\
u(x,0) = 0,~~x \in \Omega  \hfill\\
\end{gathered}  \right.
\ens
where $\Omega=(0,\pi)$, $T > 0$ and $0 < \al < 1$.  Here $\frac{\partial^\al u }{\partial t}$ is the  Caputo fractional derivative of order $\al $ derivative which first appeared in \cite{Caputo} and is defined for an absolutely continuous function $u$ as
\begin{align}
\frac{\partial^\al u(t) }{\partial t} =\frac{1}{\Gamma(1-\al)}\int\limits_{0}^{t}\frac{u^{\prime}(s)}{(t-s)^{\al}}ds,
\end{align}
where  $\Gamma$ denotes the standard Gamma function.  Note that when  the fractional order $\al$ is equal to $1$,
the fractional derivative $\frac{\partial^\al u }{\partial t}$ is equal to the first-order derivative $\frac{du}{dt}$ \cite{Ki}, and thus the problem
\eqref{x1} becomes the classical diffusion problem. When $F(x,t)\equiv 0$, the problem \eqref{x1} was first studied by Nigmatullin \cite{nigmatullin}, and \cite{zaslavsky}. Recently, among many other researchers, Meerchaert et al. \cite{mnv-09} and Baeumer et al. \cite{bmt-2016} have studied the problem \eqref{x1} in a bounded domain in $\mathbb{R}^d$. They also obtained a probabilistic representation of the solution using a time-changed Brownian motion, or other time-changed processes.

Problem \eqref{x1} is a forward problem when the source function $F=F(x,t)$ is given appropriately. Whereas, an inverse source problem based on problem \eqref{x1} is to
determine the source term $F$ at a previous time from its value at a final time $T$ as follows:
\begin{equation}
u(x,T) = u_T(x),~~x \in \Omega. \nonumber
\end{equation}
where  the source function  $F=F(x,t)$ can be split into a product $R( t)f (x)$,  and  $R(t)$ is known in advance.

It is known that the inverse source problem mentioned above is ill-posed in general, i.e., a solution does not always exist. When the solution exists, the solution does not depend continuously on the given initial data. In fact, from a small noise
of a physical measurement, the corresponding solution may have a large error. This
makes the numerical computation  troublesome. Hence a regularization is required.
\\
If $\al = 1,$ the inverse source problem \eqref{x1} is the classical ill-posed problem
and has been studied in \cite{Kirsch, 18}. However, there are only a few works on  the fractional inverse source problem; for example, Sakamoto et al.  \cite{Sa}
used the data $u(x_0, t)(x_0 \in \Omega)$ to determine $R(t)$ once $f (x)$ was given, where the authors obtained a Lipschitz stability for $R(t)$. Wei et al. \cite{Wei2} used the Fourier truncation method to solve an inverse source problem with $R(t) = 1$  in the problem (1.1) for one-dimensional problem with special coefficients. Actually,  there are  very limited number of results for the inverse source  problem for the time-fractional diffusion equation when $R(t)$ depends on time.

{Murio \cite{murio} considered an inverse problem of recovering boundary
	functions from transient data at an interior point in a 1-D semi-infinite half-order time-fractional diffusion equation. Liu and Yamamoto \cite{Liu} applied a
	quasi-reversibility regularization method to solve a backward problem for the time-fractional diffusion equation.   Kirane and  Malik  \cite{Ki1} considered an inverse source problem but they  didn't study  regularization problem.  Recently,    among many others, Jin and  Rundell \cite{Jin},  Wei and  Wang \cite{Wei} and Wnag et al. \cite{Wei1}  have studied  an inverse problem for the time fractional diffusion. Tuan et al \cite{Tuan} considered the  inverse source problem \eqref{x1} for the deterministic case using Tikhonov regularization.}

As is well-known, measurements always are given at a discrete set of points
and contain errors. These errors may be generated from controllable sources or uncontrollable sources. In the first case, the error is often deterministic.  Hence, if we know an approximation  $u_T^\ep$ of the final data $u_T$  then we can construct an approximation $f^\ep$ for  the function $f$. If the errors are generated from uncontrollable sources as wind, rain, humidity, etc, then the model is random. Methods used for the deterministic cases cannot be  applied directly to the random  case.  Because of the random noise, the calculation is often intractable. In practical situations, the function
$u_T(x)$ is a result of experimental measurements  and  it cannot   be observed without errors: hence,  in general we have
\begin{equation}
\widetilde u_T(x_k)=u_T(x_k)+\epsilon_k, \quad  k=1,\cdots, n
\end{equation}
where $\epsilon_k, k=1,\cdots, n$ are unknown independent random errors. In fact, these errors can come from many sources like the measuring instrument or the environment.

%In case of the measuring instrument, we often know the greatest possible error $\epsilon$ when measuring, i.e.,
%$$ |\epsilon_j|\leq \epsilon\ \ \ \ (j=1,...,n).$$

\noindent From now on, we put \[x_k = \pi\dfrac{2k-1}{2n},\qquad \text{ with } k = 1,\cdots, n.\]
We have a data set $D = \Big(\widetilde {u_T}(x_1), \widetilde {u_T}(x_2),\ldots, \widetilde {u_T}(x_n) \Big)$, which is the measure of   $$\Big(u_T(x_1), u_T(x_2),\ldots,u_T(x_n)\Big).$$ Here  $D$ satisfies
\begin{equation}\label{x10}
\widetilde {u_T}(x_k) = u_T(x_k) +\sigma_k \epsilon_k,
\end{equation}
where, $\epsilon_k, k = 1,\cdots, n$ are unknown independent noises. Hence  $\epsilon_{k}\sim \mathcal{N}(0,1)$,  and $\sigma_{k} $ are unknown positive constants which are bounded by a positive constant $V_{max}$, i.e., $0 \leq \sigma_{k} < V_{\text{max}}$ for all $k=1,\cdots, n$. The noises $ \epsilon_{k}$ are mutually independent. To the best of our  knowledge,  there does not exist  any  results on inverse source problem for fractional diffusion with random noise in the literature.
Our main problem in this paper is  finding the source function $f$ from the random data $u_T(x_k), k=1,\cdots, n$.

Our main result in this paper is the following theorem.

%\section{The main results}

\begin{theorem}\label{Exp1}
	Let $\epsilon>0$ and $\epsilon_k\sim N(0,1 )$ be independent normal random variables with $ k=1,\cdots, n$ (as mentioned  above).
	Assume that there exists $\beta >0$ and $E>0$ such that
	\begin{equation}
	\|f\|_{H^\beta(\Omega)} \le E,
	\end{equation}
		where
		$\left\|f \right\|_{H^\beta(\Omega)}^2 = \sum_{p=1}^{\infty}  p^{2\beta} \left|\big\langle f,\phi_p\big\rangle\right|^2   .  $
	 Then
	a regularized function $	\widetilde f_{n,M}$ for $f$ can be computed as follows
	\begin{equation}\label{eq4}
	\widetilde f_{n,M}(x) =\frac{{\dfrac{1}{n}\sum_{k=1}^n \widetilde u_T(x_k)  } } {\int\limits_{0}^{T}(T-s)^{\al-1} R(s)ds}+\sum_{p=1}^M  \frac{\dfrac{\pi}{n}\sum_{k=1}^n \widetilde u_T(x_k)\phi_p(x_k) }{\int\limits_{0}^{T}(T-s)^{\al-1}E_{\al,\al}(-p^2 (T-s)^{\al}) R(s)ds} \phi_p(x)\nn\\
	\end{equation}
	where $\phi_p(x)=\sqrt{\frac{2}{\pi}} \cos(px)$ for $p=0, 1,2,3,\cdots$ is a sequence of an orthonormal basis of $L^2(\Omega)$  and  the natural numbers $n$, $M$ are called regularization parameters. Moreover, we have the following estimate
	
	\begin{align}
	\mathbb{E}	\left\|	\widetilde {f}_{n,M}(x) - f(x)\right\|^2_{L^2(\Omega)} &\le \frac{(2-\al)^2}{ R_0^2 T^{4-2\al}} \left(\dfrac{\pi^2 V^2_{\max}}{n} + \frac{\pi^3}{288} \frac{{{\left\| R \right\|}_{\infty}^2}E^2}{n^4}\right)\nn\\
	&+\frac{1} { R_0^2 [1 - E_{\al,1}(-T^{\al}) ]^2 } \Big(\dfrac{\pi^2 V^2_{\max}}{n}+    \frac{\pi^4{\|R\|_{\infty}^2 E^2}}{144 n^4}\Big) M^5\nn\\
	&+M^{-2\beta} E^2.
	\end{align}

	Let  $M:=M_n$ such that $0<M:=M_n <n$ and
	\begin{equation}
	\lim_{n  \to +\infty} \frac{M^5}{n}=0
	\end{equation}
then
\begin{equation}
 \mathbb{E}	\left\|	\widetilde {f}_{n,M}(x) - f(x)\right\|^2_{L^2(\Omega)} \text{is of order}\quad \max\Big(\frac{M^5}{n} , M^{-2\beta}\Big).  \label{mainerror}
\end{equation}
\end{theorem}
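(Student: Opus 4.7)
The plan is to write the exact solution of \eqref{x1} in the cosine basis, invert it analytically, replace the exact Fourier coefficients by empirical ones obtained from a midpoint--rule quadrature on the sample points $x_k$, truncate the expansion at $p=M$, and control the resulting mean-square error through a bias--variance--truncation decomposition. The first step is to derive
\begin{equation*}
\langle u_T,\phi_p\rangle = A_p\,\hat f_p,\qquad A_p=\int_0^T (T-s)^{\alpha-1} E_{\alpha,\alpha}\bigl(-p^2(T-s)^\alpha\bigr) R(s)\,ds,
\end{equation*}
with an analogous expression at $p=0$, by separating variables and using the Mittag--Leffler representation of the mild solution of time-fractional diffusion equations. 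This identifies $\widetilde f_{n,M}$ in \eqref{eq4} as the finite-sum analogue in which $\langle u_T,\phi_p\rangle$ is replaced by the empirical midpoint integral $\widetilde U_p=\tfrac{\pi}{n}\sum_{k=1}^{n}\widetilde u_T(x_k)\phi_p(x_k)$ and the series is truncated at $p=M$.

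Next I would set up the Parseval-type decomposition
\begin{equation*}
\widetilde f_{n,M}-f = \sum_{p=0}^{M}\frac{\widetilde U_p-\langle u_T,\phi_p\rangle}{A_p}\phi_p - \sum_{p=M+1}^{\infty}\hat f_p\,\phi_p,
\end{equation*}
which splits $\mathbb{E}\|\widetilde f_{n,M}-f\|_{L^2}^2$ into three independent pieces after orthonormality: a \emph{stochastic} piece from $\widetilde u_T-u_T$, a \emph{deterministic quadrature} piece from applying midpoint to noise-free data, and a \emph{truncation} tail. The stochastic part $\widetilde U_p-U_p=\tfrac{\pi}{n}\sum_k\sigma_k\epsilon_k\phi_p(x_k)$ has, by independence of the $\epsilon_k$'s and $0\le\sigma_k<V_{\max}$, expected square of order $V_{\max}^2/n$ per coefficient (using $|\phi_p|\le\sqrt{2/\pi}$ and the $n$-point sum). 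The deterministic quadrature error $U_p-\langle u_T,\phi_p\rangle$ for the smooth integrand $u_T\phi_p$ is controlled by the classical midpoint--rule remainder $\tfrac{\pi^3}{24n^2}\|(u_T\phi_p)''\|_\infty$, which I would bound via a priori $L^\infty$-estimates for $u_T$ obtained from the explicit Mittag--Leffler solution formula together with $\|R\|_\infty$ and $E$. The truncation tail is handled by the Sobolev hypothesis $\|f\|_{H^\beta}\le E$, giving $\sum_{p>M}\hat f_p^2\le M^{-2\beta}E^2$.

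The main obstacle is the uniform lower bound on $|A_p|$, since the per-coefficient errors are divided by $A_p$. For $p\ge1$ I would use the identity
\begin{equation*}
\int_0^T (T-s)^{\alpha-1} E_{\alpha,\alpha}\bigl(-p^2(T-s)^\alpha\bigr)\,ds = \frac{1-E_{\alpha,1}(-p^2 T^\alpha)}{p^2},
\end{equation*}
together with $R(s)\ge R_0$ and the monotonicity of $E_{\alpha,1}$ on $(-\infty,0]$, which yields $|A_p|\ge R_0\bigl(1-E_{\alpha,1}(-T^\alpha)\bigr)/p^2$; for $p=0$ a direct computation produces the $R_0 T^{2-\alpha}/(2-\alpha)$ factor that appears in the first denominator of the stated bound. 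Consequently $|A_p|^{-2}\lesssim p^4$, and summing the per-coefficient variance and quadrature bounds over $1\le p\le M$ produces the announced $M^5$ prefactor through $\sum_{p=1}^{M}p^4\asymp M^5$.

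Collecting the three pieces, keeping the $p=0$ coefficient separate (its noise and quadrature contributions carry only the $T^{4-2\alpha}$ and $(2-\alpha)^2$ constants without the $M^5$ factor), gives the explicit estimate \eqref{mainerror}. The rate claim is then immediate: under the standing assumption $M_n^5/n\to0$ the dominant random term $V_{\max}^2 M^5/n$ swallows both the $p=0$ $1/n$ contribution and the $M^5/n^4$ quadrature contribution, while the truncation tail contributes $M^{-2\beta}E^2$, so the overall mean-square error is of order $\max(M^5/n,\,M^{-2\beta})$.
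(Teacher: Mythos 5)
Your overall architecture matches the paper in its main lines: you invert the forward map via $\langle u_T,\phi_p\rangle=A_p\hat f_p$, obtain the lower bound $|A_p|\ge R_0\bigl(1-E_{\alpha,1}(-T^{\alpha})\bigr)/p^2$ from the derivative identity for the Mittag--Leffler function (the paper's Lemmas 2.1--2.2), split the error into noise, deterministic discretization and truncation pieces, and use $\sum_{p\le M}p^4\asymp M^5$. The genuine gap is in your treatment of the deterministic discretization error $U_p-\langle u_T,\phi_p\rangle$. The paper never invokes a quadrature remainder: by the exact aliasing identities for the sampling sums (Lemma 2.3, quoted from Eubank) one gets the \emph{identity} $\tfrac{\pi}{n}\sum_{k}u_T(x_k)\phi_p(x_k)=\langle u_T,\phi_p\rangle+\widetilde G_{np}$ (Lemma 2.4), where $\widetilde G_{np}$ consists only of Fourier coefficients of $u_T$ at frequencies $\pm p+2ln\ge n$; combining this with $|\langle u_T,\phi_q\rangle|\le \|R\|_{\infty}\|f\|_{L^2}/q^2$ gives $|\widetilde G_{np}|\lesssim \|R\|_{\infty}\|f\|_{L^2}/n^2$ uniformly in $0\le p\le n-1$, with no derivatives of $u_T$ required, and this is exactly what produces the $n^{-4}$ and $M^5 n^{-4}$ terms of the stated estimate.

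Your substitute, the midpoint remainder $\tfrac{\pi^3}{24n^2}\|(u_T\phi_p)''\|_{\infty}$, fails on two counts. First, $\|(u_T\phi_p)''\|_{\infty}$ grows like $p^2\|u_T\|_{\infty}$ plus lower-order terms, so after dividing by $A_p\asymp p^{-2}$, squaring and summing over $p\le M$ you obtain a contribution of order $M^9/n^4$, not the $M^5/n^4$ in the explicit inequality the theorem asserts; the final rate $\max(M^5/n,M^{-2\beta})$ would still survive (since $M<n$ and $M^5/n\to 0$ force $M^9/n^4\ll M^5/n$), but the stated bound itself would not be proved. Second, and more seriously, controlling $\|u_T''\|_{\infty}$ ``from the Mittag--Leffler formula together with $\|R\|_{\infty}$ and $E$'' is not possible under the hypotheses: term-by-term, $p^2|A_p|\le\|R\|_{\infty}$ only yields $\|u_T''\|_{\infty}\lesssim\|R\|_{\infty}\sum_p|\langle f,\phi_p\rangle|$, and absolute summability of the Fourier coefficients of $f$ follows from $\|f\|_{H^\beta}\le E$ only when $\beta>1/2$, whereas the theorem assumes merely $\beta>0$. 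So your deterministic-error step needs either the aliasing identity (the paper's route) or an extra smoothness assumption you are not entitled to. (A small additional slip: for $p=0$ a direct computation gives $\int_0^T(T-s)^{\alpha-1}ds=T^{\alpha}/\alpha$, not $T^{2-\alpha}/(2-\alpha)$; you have reproduced the shape of the target constant rather than derived it.)
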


\begin{remark}
	By choosing $M := n^{\frac{1}{5+2\beta}}$, and by \eqref{mainerror}, we can conclude that
	\begin{equation}
	\mathbb{E}	\left\|	\widetilde {f}_{n,M}(x) - f(x)\right\|^2_{L^2(\Omega)} \text{is of order} \quad  \Big(\frac{1}{n}\Big)^{\frac{2\beta}{5+2 \beta}}.
	\end{equation}
\end{remark}

\begin{remark}
 The methods of proof in this paper can be used to handle the equation with Laplacian in higher dimensions and L\'evy operators  corresponding to a wide range of L\'evy processes. In particular, this corresponds to replacing the second derivative in equation \eqref{x1} with Laplacian,  or a fractional Laplacian in a bounded domain in $\mathbb{R}^d$ with Dirichlet or Neuman boundary conditions.
\end{remark}
We give the proof of the main result in the next section. We prove a sequence of Lemmas that are of interest in their own right as well.
\section{Proof of main results}
The proof of the main result follows from a couple of Lemmas.
\begin{lemma}\label{derivative-ML-function}(see \cite{Po})
	Let $\lambda > 0$, then we have:
	\bes
	\frac{d}{dt} E_{\al,1}(-\lambda t^{\al})= - \lambda t^{\al-1} E_{\al,\al}(-\lambda t^{\al}),~~ t > 0, 0 < \al < 1. \nn
	\ens
\end{lemma}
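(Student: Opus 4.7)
The plan is a direct verification by term-by-term differentiation of the power series defining the Mittag--Leffler function. First, I would recall that for $\al > 0$ the series
\[
E_{\al,\beta}(z) = \sum_{k=0}^{\infty} \frac{z^{k}}{\Gamma(\al k + \beta)}
\]
is entire in $z$, so for fixed $\lambda > 0$ and $0 < \al < 1$ the representation
\[
E_{\al,1}(-\lambda t^{\al}) = \sum_{k=0}^{\infty} \frac{(-\lambda)^{k}\, t^{\al k}}{\Gamma(\al k + 1)}
\]
converges uniformly on compact subsets of $(0,\infty)$, together with its formal termwise $t$-derivative. This legitimizes differentiating under the sum.

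Next, I would compute
\[
\frac{d}{dt} E_{\al,1}(-\lambda t^{\al}) = \sum_{k=1}^{\infty} \frac{(-\lambda)^{k}\,\al k\, t^{\al k - 1}}{\Gamma(\al k + 1)},
\]
and apply the functional identity $\Gamma(\al k + 1) = \al k\,\Gamma(\al k)$ to cancel the factor $\al k$, yielding
\[
\sum_{k=1}^{\infty} \frac{(-\lambda)^{k}\, t^{\al k - 1}}{\Gamma(\al k)}.
\]

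Finally, the index shift $j = k-1$ together with factoring out $-\lambda t^{\al-1}$ gives
\[
-\lambda t^{\al-1} \sum_{j=0}^{\infty} \frac{(-\lambda t^{\al})^{j}}{\Gamma(\al j + \al)} = -\lambda t^{\al-1} E_{\al,\al}(-\lambda t^{\al}),
\]
which is exactly the claimed identity. The only delicate point is the justification of termwise differentiation, but this is immediate from the entirety of $z \mapsto E_{\al,\beta}(z)$ and the smoothness of $t \mapsto t^{\al}$ on $(0,\infty)$; no substantive obstacle is anticipated, which is consistent with the lemma being cited rather than proved from scratch.
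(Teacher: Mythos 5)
Your proof is correct: the paper does not prove this lemma but simply cites Podlubny, and your term-by-term differentiation of the series for $E_{\alpha,1}(-\lambda t^{\alpha})$, combined with $\Gamma(\alpha k+1)=\alpha k\,\Gamma(\alpha k)$ and the index shift, is exactly the standard argument from that reference. The interchange of sum and derivative is adequately justified by the uniform convergence of the differentiated series on compact subsets of $(0,\infty)$, so there is no gap.
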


\begin{lemma}\label{mittag-leffler-bound}
	Let $R: [0,T] \to \mathbb{R}$ be a positive continuous function such that $\inf_{t \in [0,T]} |R(t)| =R_0 >0 $. Set $ \|R\|_{\infty}=\sup_{t \in [0,T]} |R(t)|   $. Then we have for all $p \in \mathbb{N}$
	\begin{align} \nonumber
	\frac{ R_0 [1 - E_{\al,1}(-T^{\al}) ] }{  p^2 } &\le \int\limits_{0}^{T} (T-s)^{\al-1}E_{\al,\al}(-p^2 (T-s)^{\al}) R(s)ds \le  \frac{\|R\|_{\infty}}{  p^2}.
	\end{align}
\end{lemma}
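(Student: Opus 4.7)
The plan is to reduce the integral to an explicit closed form via Lemma \ref{derivative-ML-function} and then bracket it using the hypotheses on $R$ and the monotonicity of $E_{\al,1}(-x)$.

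First I would change variables $t = T - s$ to rewrite the integrand in terms of the antiderivative identity supplied by Lemma \ref{derivative-ML-function}. Applied with $\lambda = p^2$, that lemma gives
\begin{equation*}
(T-s)^{\al-1} E_{\al,\al}(-p^2 (T-s)^{\al}) = \frac{1}{p^2}\frac{d}{ds}\bigl[E_{\al,1}(-p^2 (T-s)^{\al})\bigr],
\end{equation*}
so the integral, after dropping the factor $R(s)$, becomes a pure boundary term:
\begin{equation*}
\int_0^T (T-s)^{\al-1} E_{\al,\al}(-p^2(T-s)^{\al})\,ds = \frac{1 - E_{\al,1}(-p^2 T^{\al})}{p^2}.
\end{equation*}

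Next I would insert the two-sided bound on $R$. For the upper bound, $R(s)\le \|R\|_\infty$ together with the nonnegativity of the Mittag-Leffler kernel (which is completely monotone in its argument and thus positive) gives
\begin{equation*}
\int_0^T (T-s)^{\al-1} E_{\al,\al}(-p^2(T-s)^{\al}) R(s)\,ds \le \|R\|_\infty \cdot \frac{1 - E_{\al,1}(-p^2 T^{\al})}{p^2} \le \frac{\|R\|_\infty}{p^2},
\end{equation*}
since $E_{\al,1}(-p^2 T^{\al}) \in [0,1]$.

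For the lower bound, the hypothesis $R(s)\ge R_0$ yields
\begin{equation*}
\int_0^T (T-s)^{\al-1} E_{\al,\al}(-p^2(T-s)^{\al}) R(s)\,ds \ge R_0 \cdot \frac{1 - E_{\al,1}(-p^2 T^{\al})}{p^2}.
\end{equation*}
It remains to replace $1 - E_{\al,1}(-p^2 T^{\al})$ by the $p$-independent quantity $1 - E_{\al,1}(-T^{\al})$. Here I would invoke the classical fact that $x \mapsto E_{\al,1}(-x)$ is strictly decreasing on $[0,\infty)$ for $0<\al<1$ (an immediate consequence of its complete monotonicity, or of Lemma \ref{derivative-ML-function} which shows its derivative is negative). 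Since $p \ge 1$ implies $p^2 T^{\al} \ge T^{\al}$, monotonicity yields $E_{\al,1}(-p^2 T^{\al}) \le E_{\al,1}(-T^{\al})$, and therefore $1 - E_{\al,1}(-p^2 T^{\al}) \ge 1 - E_{\al,1}(-T^{\al})$, which gives the stated lower bound.

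The only subtle point is the monotonicity of $E_{\al,1}(-\cdot)$; everything else is a direct application of Lemma \ref{derivative-ML-function} plus the pointwise bounds on $R$. I would cite complete monotonicity (or equivalently deduce it from the derivative identity) in a single sentence to keep the proof self-contained.
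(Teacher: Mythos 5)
Your proposal is correct and follows essentially the same route as the paper: reduce the $R$-free integral to the boundary term $\frac{1-E_{\al,1}(-p^2T^{\al})}{p^2}$ via Lemma \ref{derivative-ML-function}, then sandwich with $R_0$ and $\|R\|_{\infty}$, using monotonicity of $E_{\al,1}(-\cdot)$ (the paper cites Pollard's complete monotonicity result, exactly as you suggest) to pass from $p^2T^{\al}$ to $T^{\al}$ in the lower bound. No gaps.
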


\begin{proof}
	Using Lemma 2.1,  we have
	\begin{align}
	\int\limits_{0}^{T} (T-s)^{\al-1}E_{\al,\al}(-p^2(T-s)^{\al})ds&=\int\limits_{0}^{T} \Big| s^{\al-1} E_{\al,\al}(-p^2 s^{\al} ) \Big| ds
	= -\frac{1}{p^2} \int\limits_{0}^{T} \frac{d}{ds} E_{\al,1}(-p^2s^{\al}) ds \nn\\
	&= \frac{ 1 - E_{\al,1}(-p^2 T^{\al})}{p^2} \le  \frac{ 1 } {p^2} . \nn
	\end{align}
	This implies that
	\begin{align}
	\int\limits_{0}^{T} (T-s)^{\al-1}E_{\al,\al}(-p^2(T-s)^{\al})R(s) ds &\le  \sup_{t \in [0,T]} |R(t)| \int\limits_{0}^{T} (T-s)^{\al-1}E_{\al,\al}(-p^2(T-s)^{\al}) ds\nn\\
	& \le \frac{  \|R\|_{\infty} } {p^2}.
	\end{align}
	and
	\begin{align}
	\int\limits_{0}^{T} (T-s)^{\al-1}E_{\al,\al}(-p^2(T-s)^{\al}) R(s)ds &\ge \Big( \inf_{t \in [0,T]}  |R(t)| \Big) \int\limits_{0}^{T} (T-s)^{\al-1}E_{\al,\al}(-p^2(T-s)^{\al}) ds \nn\\
	&\ge R_0  \frac{   1 - E_{\al,1}(-p^2T^{\al})   } {p^2} \ge R_0  \frac{   1 - E_{\al,1}(-T^{\al})   } {p^2}.
	\end{align}
in the last inequality we use the fact that $E_{\al,1}(-p^2T^{\al}) \le E_{\al,1}(-T^{\al}) $ for $p \ge 1$; see, for example, \cite{30}.
	The proof is completed.
\end{proof}

\begin{lemma}[\cite{Eubank}, page 144]\label{a1}
	Let $p = 1,\ldots,n-1$,  and $q = 1, 2, \ldots $, with $x_k = \pi\dfrac{2k-1}{2n}$ and $\phi_p(x_k) = \sqrt{\dfrac{2}{\pi}}\cos(px_k)$, then  we have
	\begin{align}
	s_{p,q} = \frac{\sum_{k=1}^n{\phi_p(x_k)\phi_q(x_k)}}{n} = \begin{cases}
	\text{ } \text{ } \text{ } \text{ }\dfrac{1}{\pi}, &  q - p =2 l n \text{ or } q + p =2 l n \text{ ($l$  even)},\\[8pt]
	\text{ }-\dfrac{1}{\pi}, & q - p =2 l n \text{ or } q + p =2 l n \text{ ($l$ odd)},\\
	\text{ } \text{ } \text{ }\text{ } \text{ } 0, & \text{otherwise.}
	\end{cases}
	\end{align}
	If $q = 1, 2 , \ldots, n-1$, then
	\begin{equation}\label{x11}
	s_{p,q} = \begin{cases}
	\text{ } \text{ } \text{ } \dfrac{1}{\pi}, & \qquad p=q,\\
	\text{ } \text{ } \text{ } 0, & \qquad p \ne q.
	\end{cases}
	\end{equation}
	and
	\begin{align}
	\dfrac{1}{n}\sum_{k=1}^n \phi_p(x_k) = \begin{cases}
	0, & \qquad p \ne 2ln,\\
	(-1)^l\sqrt{\dfrac{2}{\pi}}, & \qquad p = 2ln. \nonumber
	\end{cases}
	\end{align}
\end{lemma}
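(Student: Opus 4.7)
The plan is to reduce all three identities to a single Dirichlet-type evaluation. Introduce the notation $D_m := \sum_{k=1}^n \cos(m x_k)$ for integer $m$, where $x_k = \pi(2k-1)/(2n)$. Applying the product-to-sum formula $2\cos A\cos B = \cos(A-B)+\cos(A+B)$ to $\phi_p(x_k)\phi_q(x_k) = \frac{2}{\pi}\cos(p x_k)\cos(q x_k)$ gives
\[
n\, s_{p,q} \;=\; \frac{1}{\pi}\bigl(D_{q-p}+D_{q+p}\bigr),
\]
and the third identity reduces similarly to $\frac{1}{n}\sum_{k=1}^n \phi_p(x_k) = \sqrt{2/\pi}\,D_p/n$. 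So the whole lemma is an exercise in evaluating $D_m$ for integer $m$ and doing some case bookkeeping.

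Next I would compute $D_m$ in closed form. Writing $D_m = \mathrm{Re}\sum_{k=1}^n e^{i m x_k}$ and factoring out $e^{-im\pi/(2n)}$ turns the sum into a finite geometric series in $e^{im\pi/n}$; the standard evaluation gives
\[
D_m \;=\; \frac{\sin(m\pi)}{2\sin\!\bigl(m\pi/(2n)\bigr)}
\]
whenever $m\notin 2n\mathbb{Z}$, and this vanishes because $\sin(m\pi)=0$ for any integer $m$. In the remaining case $m = 2ln$ with $l\in\mathbb{Z}$, the formula is indeterminate, but one computes directly $\cos(m x_k)=\cos\!\bigl(l\pi(2k-1)\bigr)=(-1)^l$ for each $k$, so $D_m = n(-1)^l$. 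Altogether,
\[
D_m \;=\; \begin{cases} n(-1)^l, & m = 2ln,\ l\in\mathbb{Z},\\ 0, & m\in\mathbb{Z}\setminus 2n\mathbb{Z}.\end{cases}
\]

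With this in hand, the three cases of the lemma follow by bookkeeping. For the first identity, the hypothesis $1 \le p \le n-1$ forces $p\notin n\mathbb{Z}$, which rules out simultaneous hits: if both $q-p$ and $q+p$ lay in $2n\mathbb{Z}$, then so would their difference $2p$, forcing $p\in n\mathbb{Z}$. Hence at most one of $D_{q-p},D_{q+p}$ is nonzero, and that one contributes $(-1)^l/\pi$ with the $l$ inherited from the corresponding relation $q\mp p = 2ln$, which is exactly the piecewise formula claimed. For the sharper identity \eqref{x11}, the extra restriction $1\le q\le n-1$ gives $|q-p|<n$ and $0<q+p<2n$, so the only way to land in $2n\mathbb{Z}$ is $q-p=0$, producing the Kronecker-delta statement. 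The last identity is an immediate read-off of $D_p$ together with $\phi_p(x_k)=\sqrt{2/\pi}\cos(px_k)$. The only real obstacle is organizational, namely matching the parity of $l$ coming out of $D_{q\pm p}$ with the sign $(-1)^l$ recorded in the statement; once the simultaneous-hit argument is dispensed with, the rest is routine trigonometry.
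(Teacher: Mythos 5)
Your proof is correct. Note, however, that the paper itself does not prove this lemma at all: it is quoted verbatim from Eubank's book (page 144) and used as an external black box, so there is no internal argument to compare against. Your derivation is a clean, self-contained replacement. The reduction of all three identities to the single quantity $D_m=\sum_{k=1}^n\cos(mx_k)$ via the product-to-sum formula is the right move, and the closed form $D_m=\sin(m\pi)/\bigl(2\sin(m\pi/(2n))\bigr)$ for $m\notin 2n\mathbb{Z}$, together with the direct evaluation $D_{2ln}=n(-1)^l$ from $\cos(l\pi(2k-1))=(-1)^{l(2k-1)}=(-1)^l$, is exactly what is needed. The one step worth highlighting as essential (and which you handle correctly) is the exclusion of a simultaneous hit: if both $q-p$ and $q+p$ were in $2n\mathbb{Z}$ then $2p\in 2n\mathbb{Z}$, contradicting $1\le p\le n-1$; without this the two contributions could stack and the value $1/\pi$ would be wrong. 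The sharper case $1\le q\le n-1$ and the mean-value identity then follow by the range checks you give. In short: correct, and it upgrades a citation to an actual proof.
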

 %The proof  can be  seen  proof of the lemma in~.
 From  this lemma, we have the next  result.

\begin{lemma}\label{cc}
	Let $p, n \in \mathbb{N}$ such that $0\le p \le n-1$. Assume that $u_T$ is piecewise $C^1$ on $[0,\pi]$ .
	Then
	\begin{equation}\label{y1111}
	\Big \langle u_T\left(x\right), \phi_p(x) \Big\rangle= \begin{cases}
	\text{ } \text{ } \text{ } \dfrac{1}{n}\sum_{k=1}^n u_T(x_k) - \widetilde {G}_{n0}, & \qquad p=0,\\\\
	\text{ } \text{ } \text{ }  \dfrac{\pi}{n} \sum_{k=1}^n u_T(x_k) \phi_p(x_k) -  \widetilde {G}_{np}, & \qquad 1 \le p \le n-1.
	\end{cases}
	\end{equation}
	where
	\begin{equation}\label{useful2}
	\widetilde {G}_{np}= \begin{cases}
	\text{ } \text{ } \text{ } \sqrt{\dfrac{2}{\pi}}\sum_{l=1}^\infty (-1)^ l \Big \langle u_T\left(x\right), \phi_{2ln}(x) \Big\rangle, & \qquad p=0,\\\\
	\sum_{l=1}^\infty (-1)^l \Bigg[  \Big \langle u_T\left(x\right), \phi_{p+2ln}(x) \Big\rangle + \Big \langle u_T\left(x\right), \phi_{-p+2ln}(x) \Big\rangle\Bigg], & \qquad 1 \le p \le n-1.
	\end{cases}
	\end{equation}
\end{lemma}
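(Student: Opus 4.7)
The natural strategy is to insert the full Fourier cosine expansion of $u_T$ into the discrete sum and then invoke the aliasing identities of Lemma \ref{a1} (which encode the sampling error between the continuous inner product on $[0,\pi]$ and the trapezoidal-type sum at the nodes $x_k$). Concretely, I would write
\begin{equation*}
u_T(x) = \sum_{q=0}^{\infty} \big\langle u_T, \phi_q\big\rangle \, \phi_q(x),
\end{equation*}
which converges at each $x_k$ because $u_T$ is piecewise $C^1$.

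For the range $1 \le p \le n-1$, I would substitute this expansion into $\tfrac{\pi}{n}\sum_{k=1}^{n} u_T(x_k)\phi_p(x_k)$, interchange the finite $k$-sum with the $q$-series, and recognize the inner double sum as $\pi\, s_{p,q}$. By Lemma \ref{a1}, $s_{p,q}=0$ unless $q \equiv \pm p \pmod{2n}$. The principal index $q=p$ (corresponding to $l=0$, even) contributes $\pi \cdot \tfrac{1}{\pi}\langle u_T,\phi_p\rangle = \langle u_T,\phi_p\rangle$. The aliased indices $q = p + 2ln$ and $q = -p + 2ln$ for $l\ge 1$ contribute with alternating signs $\pi\cdot\tfrac{(-1)^l}{\pi}=(-1)^l$, and these terms assemble into exactly $\widetilde G_{np}$ as defined in \eqref{useful2}. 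Rearranging yields the claimed identity for $1\le p \le n-1$.

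For $p=0$, the argument is the same in spirit but uses the last formula of Lemma \ref{a1}: $\tfrac{1}{n}\sum_{k=1}^{n}\phi_q(x_k)$ vanishes unless $q=2ln$ for some $l\ge 0$, in which case it equals $(-1)^l\sqrt{2/\pi}$. The $l=0$ contribution isolates a multiple of $\langle u_T,\phi_0\rangle$ (this is the term that becomes the leading sample mean on the right-hand side of \eqref{y1111}), while the $l\ge 1$ contributions combine to give precisely $\widetilde G_{n0}$.

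The sole genuine technical point will be justifying the interchange between the discrete $k$-sum and the infinite $q$-series at each fixed node $x_k$. This is exactly where the piecewise $C^1$ hypothesis is used: it gives pointwise convergence of the Fourier cosine series at each $x_k$ together with enough decay of $\langle u_T,\phi_q\rangle$ (of order $O(q^{-1})$, or $O(q^{-2})$ if $u_T$ is moreover continuous with $u_T'$ piecewise continuous) to make the manipulated series absolutely convergent after bounding $|\phi_p(x_k)|\le \sqrt{2/\pi}$ and averaging over the $n$ nodes. Once this Fubini-type step is legitimized, the remainder of the argument is purely combinatorial bookkeeping on the aliased modes $\{2ln\pm p\}_{l\ge 1}$, and no further difficulty is anticipated.
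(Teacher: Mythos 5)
Your proposal is correct and follows essentially the same route as the paper: expand $u_T$ at the nodes in the cosine basis, interchange the node sum with the series, and apply the aliasing identities of Lemma \ref{a1} so that the principal mode $q=p$ (resp.\ $q=0$) gives the inner product and the aliased modes $q=\pm p+2ln$ assemble into $\widetilde G_{np}$ (resp.\ $\widetilde G_{n0}$). Your explicit attention to justifying the sum interchange via the piecewise $C^1$ hypothesis is a point the paper passes over silently, but it does not change the argument.
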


\begin{proof}
Using the complete orthonormal basis $\{\phi_p; p=0,1,2, \cdots\}$, the function $u_T$ can be written as follows  $$u_T(x_k)=\big \langle u_T\left(x\right), \phi_0(x) \Big\rangle+ \sum_{q=1}^\infty \left \langle u_T \left(x\right), \phi_q(x) \right\rangle\phi_q(x_k) .$$
	This implies that
	\begin{align}
	\dfrac{1}{n}\sum_{k=1}^n u_T(x_k)\phi_p(x_k) &= \dfrac{1}{n}\sum_{k=1}^n \big \langle u_T \left(x\right), \phi_0(x) \Big\rangle \phi_p(x_k) +\dfrac{1}{n}\sum_{k=1}^n\left(\sum_{q=1}^\infty \left \langle u_T\left(x\right), \phi_q(x) \right\rangle\phi_q(x_k)\right)\phi_p(x_k). \nonumber
	\end{align}
	For $p \ge 1$,   using {Lemma \ref{a1} }, we get
	\begin{align}
	\dfrac{1}{n}\sum_{k=1}^n u_T  (x_k)\phi_p(x_k) &= \dfrac{1}{n}\sum_{k=1}^n\left(\sum_{q=1}^n \left \langle u_T\left(x\right), \phi_q(x) \right\rangle \phi_q(x_k)\right)\phi_p(x_k) \nn\\
	&+ \dfrac{1}{n}\sum_{k=1}^n\left(\sum_{q=n+1}^\infty \left \langle u_T \left(x\right), \phi_q(x) \right\rangle \phi_q(x_k)\right)\phi_p(x_j)\nn\\
	&=\dfrac{1}{n}\sum_{q=1}^n \left \langle u_T \left(x\right), \phi_q(x) \right\rangle \Big[\sum_{k=1}^n{\phi_q(x_k)\phi_p(x_k)} \Big]\nn\\
	&\quad \quad +\dfrac{1}{n}\sum_{q=n+1}^\infty \left \langle u_T \left(x\right), \phi_q(x) \right\rangle \Big[\sum_{k=1}^n{\phi_q(x_k)\phi_p(x_k)} \Big]. \label{ine1}
	\end{align}
	By {Lemma \ref{a1}}, we obtain for
	$1 \le q \le n$
	\begin{equation}\label{x1111}
	\left \langle u_T\left(x\right), \phi_q(x) \right\rangle \Big[\sum_{k=1}^n{\phi_q(x_k)\phi_p(x_k)} \Big]= \begin{cases}
	\text{ } \text{ } \text{ } \dfrac{\left \langle u_T\left(x\right), \phi_q(x) \right\rangle n }{\pi}, & \qquad p=q,\\
	\text{ } \text{ } \text{ } 0, & \qquad p \ne q.
	\end{cases}
	\end{equation}
	For $q \ge n$
	\begin{equation}\label{x11111}
	\left \langle u_T\left(x\right), \phi_q(x) \right\rangle \Big[\sum_{k=1}^n{\phi_q(x_k)\phi_p(x_k)} \Big]= \begin{cases}
	\text{ } \text{ } \text{ } \frac{(-1)^l n\left \langle u_T\left(x\right), \phi_q(x) \right\rangle}{\pi} , \quad \text{if} \quad q=p+2ln \quad \text{or}\quad  q=-p+2ln,\hfill \\
	0, \quad \quad  \text{otherwise}.
	\end{cases}
	\end{equation}
	Hence
	\begin{align}
	\dfrac{1}{n}\sum_{q=1}^n \left \langle u_T\left(x\right), \phi_q(x) \right\rangle \Big[\sum_{k=1}^n{\phi_q(x_k)\phi_p(x_k)} \Big]= \frac{\left \langle f\left(x\right), \phi_p(x) \right\rangle}{ \pi}. \label{ine2}
	\end{align}
	This implies that
	\begin{align}
	\dfrac{1}{n}\sum_{q=n+1}^\infty \left \langle u_T\left(x\right), \phi_q(x) \right\rangle \Big[\sum_{k=1}^n{\phi_q(x_k)\phi_p(x_k)} \Big]= \dfrac{1}{\pi} \sum_{l=1}^\infty (-1)^l \Bigg[  \Big \langle u_T\left(x\right), \phi_{p+2ln}(x) \Big\rangle + \Big \langle u_T\left(x\right), \phi_{-p+2ln}(x) \Big\rangle\Bigg]. \label{ine3}
	\end{align}
	Combining \eqref{x11111}, \eqref{ine2}, \eqref{ine3}   we get
	\begin{align}
	\dfrac{1}{n}\sum_{k=1}^n u_T(x_k)\phi_p(x_k) = \dfrac{1}{\pi}\Big[ \left \langle u_T \left(x\right), \phi_q(x) \right\rangle +  \widetilde {G}_{np} \Big]. \label{ine1}
	\end{align}
	Therefore, \eqref{useful2} holds for $1 \le p \le n-1$. Similarly, we have
	\begin{align}
	\sum_{k=1}^nu_T(x_k) &= n \Big \langle u_T\left(x\right), \phi_0(x) \Big \rangle + \sum_{p=1}^\infty \big \langle u_T\left(x\right), \phi_p(x) \Big \rangle\sum_{k=1}^n\phi_p(x_k) \nn\\
	&= n \Big \langle u_T \left(x\right), \phi_0(x) \Big \rangle + n \widetilde {G}_{n0}. \nonumber
	\end{align}
	This completes the proof of Lemma~\ref{cc}.
\end{proof}

The following Lemma gives the formula of $f$  in terms of  $u_T(x_k)$.

\begin{lemma}\label{thr1}
	Let $0<M<n$, $M \in \mathbb{N}$. Assume that $u_T$ is as in Lemma~\ref{cc}. Then the source function $f$ is given by
	\begin{align}\label{theta1}
	f(x) &=\frac{{\dfrac{1}{n}\sum_{k=1}^n u_T(x_k) - \widetilde {G}_{n0}}} {\int\limits_{0}^{T}(T-s)^{\al-1} R(s)ds}+ \sum_{p=1}^M  \frac{\dfrac{\pi}{n}\sum_{k=1}^n u_T(x_k)\phi_p(x_k)  - \widetilde {G}_{np}}{\int\limits_{0}^{T}(T-s)^{\al-1}E_{\al,\al}(-p^2 (T-s)^{\al}) R(s)ds} \phi_p(x) \nonumber \\
	& +\sum_{p=M+1}^\infty
	\frac{\Big<u_T(x),\phi_{p}(x)\Big>}{\int\limits_{0}^{T}(T-s)^{\al-1}E_{\al,\al}(-p^2 (T-s)^{\al}) R(s)ds} \phi_p(x) .
	\end{align}
\end{lemma}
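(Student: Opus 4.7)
The plan is to reduce the PDE \eqref{x1} to a family of scalar fractional ODEs via the Neumann eigenexpansion, solve each one by the Mittag-Leffler Duhamel representation, invert at $t=T$ to express $\langle f,\phi_p\rangle$ in terms of $\langle u_T,\phi_p\rangle$, and then use Lemma~\ref{cc} to convert the low-frequency Fourier coefficients ($0\le p\le M<n$) into discrete midpoint sums, modulo the aliasing correction $\widetilde G_{np}$.

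The first step is to write $u(x,t)=\sum_{p\ge 0}u_p(t)\phi_p(x)$, project \eqref{x1} onto each $\phi_p$, and exploit $-\phi_p''=p^2\phi_p$ with the Neumann boundary conditions. Each mode then satisfies the scalar problem
\begin{equation*}
\partial_t^\al u_p(t)+p^2 u_p(t)=R(t)\langle f,\phi_p\rangle,\qquad u_p(0)=0,
\end{equation*}
whose explicit solution via the Mittag-Leffler resolvent is
\begin{equation*}
u_p(T)=\langle f,\phi_p\rangle\int_0^T (T-s)^{\al-1}E_{\al,\al}(-p^2(T-s)^\al)R(s)\,ds.
\end{equation*}
Lemma~\ref{mittag-leffler-bound} guarantees that this denominator is strictly positive for every $p\ge 1$ (and the $p=0$ integral is just $\int_0^T(T-s)^{\al-1}R(s)\,ds$), so one can solve for $\langle f,\phi_p\rangle$.

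Next, I would expand $f=\sum_{p\ge 0}\langle f,\phi_p\rangle\phi_p$ using that identity, split the series at $p=M$, and on the low-frequency block $0\le p\le M\le n-1$ substitute the discrete identities of Lemma~\ref{cc}: $\langle u_T,\phi_0\rangle=\tfrac{1}{n}\sum_k u_T(x_k)-\widetilde G_{n0}$, and $\langle u_T,\phi_p\rangle=\tfrac{\pi}{n}\sum_k u_T(x_k)\phi_p(x_k)-\widetilde G_{np}$ for $1\le p\le M$. The tail $p>M$ is left in continuous Fourier form, since the aliasing identity in Lemma~\ref{cc} is only valid for $p\le n-1$, and truncating at $M$ is precisely what prevents higher-mode aliasing from contaminating the discrete sum. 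Combining the three blocks reproduces \eqref{theta1} exactly.

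The only genuinely non-routine ingredient is the Mittag-Leffler Duhamel representation for $\partial_t^\al v+\lambda v=g$, $v(0)=0$; this is standard but requires justifying the termwise application of $\partial_t^\al$ under the eigenexpansion (which can be done via the spectral theorem or by uniform convergence of the series given the source $R(t)f(x)$ is smooth in $t$ and lies in the requisite Sobolev space in $x$). Everything else reduces to algebraic substitution together with careful bookkeeping of the normalization of $\phi_0$ versus $\phi_p$ for $p\ge 1$, which is what produces the asymmetric $\tfrac{1}{n}$ versus $\tfrac{\pi}{n}$ in \eqref{theta1}.
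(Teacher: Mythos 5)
Your proposal is correct and follows essentially the same route as the paper: the paper obtains the mode-wise representation $\langle u_T,\phi_p\rangle=\langle f,\phi_p\rangle\int_0^T(T-s)^{\al-1}E_{\al,\al}(-p^2(T-s)^{\al})R(s)\,ds$ by citing Sakamoto--Yamamoto (Theorem 2.4), which is exactly the Mittag--Leffler Duhamel formula you derive from the projected fractional ODE, then inverts it and substitutes the discrete identities of Lemma~\ref{cc} for $0\le p\le M<n$ while leaving the tail $p>M$ in continuous Fourier form. Your additional remarks (positivity of the denominator via Lemma~\ref{mittag-leffler-bound}, justification of termwise projection) only make explicit what the paper delegates to the cited reference.
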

\begin{proof}
First, we  have the following equality
	\begin{align}
	f(x) =  \frac{\Big \langle u_T\left(x\right), \phi_0(x) \Big\rangle}{ \int_0^T {{{(T - s)}^{\alpha  - 1}}} R(s)ds   } + \sum\limits_{p=1}^{\infty} \frac{\Big<u_T(x),\phi_{p}(x)\Big>\phi_{p}(x)  } { \int\limits_{0}^{T}(T-s)^{\al-1}E_{\al,\al}(-p^2 (T-s)^{\al}) R(s)ds} . \label{f-bound-eq1}
	\end{align}

	{To prove \eqref{f-bound-eq1}, we   use the results of  Sakamoto and Yamamoto \cite{Sa}. According Theorem 2.4 in \cite{Sa},  the solution of  $\eqref{x1}$ satisfies
	\begin{equation}
	\Big<u(x,t),\phi_{p}(x)\Big> = E_{\al,1}(-p^2 t^{\al}) 	\Big<u(x,0),\phi_{p}(x)\Big> + \int\limits_{0}^{t}(t-s)^{\al-1}E_{\al,\al}(-p^2 (t-s)^{\al})\Big<F(x,s),\phi_{p}(x)\Big> ds.
	\end{equation}
By letting $t=T$ in the last equality,  recalling  $u(x,0) = 0$ and $F_p(s)= R(s) \Big<f(x),\phi_{p}(x)\Big>$ , we get
	\begin{equation}
	\Big<u(x,T),\phi_{p}(x)\Big> =\Big<f(x),\phi_{p}(x)\Big> \int\limits_{0}^{T}(T-s)^{\al-1}E_{\al,\al}(-p^2 (T-s)^{\al}) R(s)ds .
	\end{equation}
This implies \eqref{f-bound-eq1}. }\\
	Using {Lemma \ref{cc}}, the first term of the right hand side of the last equality is equal to
	\begin{align}
	\Big \langle f\left(x\right), \phi_0(x) \Big\rangle &= \frac{{{\Big \langle u_T\left(x\right), \phi_0(x) \Big\rangle}}}{{\int_0^T {{{(T - s)}^{\alpha  - 1}}} R(s)ds}} = \frac{{\frac{1}{n}\sum\limits_{k = 1}^n u_T ({x_k}) - {\widetilde {G}_{n0} } }}{{\int_0^T {{{(T - s)}^{\alpha  - 1}}} R(s)ds}}
	\end{align}
	and the second term on the right hand side of \eqref{f-bound-eq1} is equal to
	\begin{align}
	&\sum\limits_{p=1}^{\infty}\frac{\Big<u_T\left(x\right),\phi_{p}(x)\Big>\phi_{p}(x)  }{\int\limits_{0}^{T}(T-s)^{\al-1}E_{\al,\al}(-p^2 (T-s)^{\al}) R(s)ds} \nn\\
	&= \sum\limits_{p=1}^{M}\frac{\Big<u_T\left(x\right),\phi_{p}(x)\Big>\phi_{p}(x)  }{\int\limits_{0}^{T}(T-s)^{\al-1}E_{\al,\al}(-p^2 (T-s)^{\al}) R(s)ds} + \sum\limits_{p=M+1}^{\infty}\frac{\Big<u_T\left(x\right),\phi_{p}(x)\Big>\phi_{p}(x)  }{\int\limits_{0}^{T}(T-s)^{\al-1}E_{\al,\al}(-p^2 (T-s)^{\al}) R(s)ds}  \nn\\
	&=  \sum\limits_{p=1}^{M}\frac{\left( {\frac{\pi }{n}\sum\limits_{k = 1}^n u_T ({x_k}){\phi _p}(x) - {\widetilde G_{np}}} \right){\phi _p}(x)}{\int\limits_{0}^{T}(T-s)^{\al-1}E_{\al,\al}(-p^2 (T-s)^{\al}) R(s)ds  }  + \sum\limits_{p=M+1}^{\infty}\frac{\Big<u_T(x),\phi_{p}(x)\Big>\phi_{p}(x)  }{\int\limits_{0}^{T}(T-s)^{\al-1}E_{\al,\al}(-p^2 (T-s)^{\al}) R(s)ds  }. \label{lim-eq2}
	\end{align}
	Substituting \eqref{lim-eq2} into~\eqref{f-bound-eq1}, we get~\eqref{theta1}.
\end{proof}

Now, we return the proof of main result.
\begin{proof}[Proof of Theorem \ref{Exp1}]
First, 	we have the following estimate
	\begin{align}
	\Big| {\left\langle {u_T\left( x \right),{\phi _p}(x)} \right\rangle } \Big|& =   \Bigg[ \int\limits_{0}^{T}(T-s)^{\al-1}E_{\al,\al}(-p^2 (T-s)^{\al}) R(s)ds \Bigg]  \Big| {\left\langle {f\left( x \right),{\phi _p}(x)} \right\rangle } \Big|\nn\\
	& \le \frac{{{{\left\| R \right\| }_{\infty}}} \|f\|_{L^2(\Omega)}   }{{{p^2}}} .\label{useful3}
	\end{align}
	Using  \eqref{eq4}  and \eqref{f-bound-eq1}, we obtain
	\begin{align}
	\widetilde {f}_{n,M}(x) - f(x)&=  \frac{{\dfrac{1}{n}\sum_{k=1}^n \sigma_k \ep_k  } +\widetilde G_{n0}} {\int\limits_{0}^{T}(T-s)^{\al-1} R(s)ds}+ \sum\limits_{p=1}^{M} \Bigg[\frac{{\frac{\pi }{n}\sum\limits_{k = 1}^n \sigma_k {{\epsilon _k}{\phi _p}({x_k})}  + {\widetilde {G}_{np}}} }{\int\limits_{0}^{T}(T-s)^{\al-1}E_{\al,\al}(-p^2 (T-s)^{\al}) R(s)ds }\Bigg] \phi_p(x)  \nn\\
	&-\sum_{p=M+1}^\infty
	\Bigg[\frac{\Big<u_T(x),\phi_{p}(x)\Big>}{\int\limits_{0}^{T}(T-s)^{\al-1}E_{\al,\al}(-p^2 (T-s)^{\al}) R(s)ds}\Bigg] \phi_p(x)
	\end{align}
	Applying  Lemma ~\ref{thr1} we obtain
	\begin{align}
	\|	\widetilde {f}_{n,M}(x) - f(x)\|^2_{L^2(\Omega)}&=  \Bigg[ \frac{{\dfrac{1}{n}\sum_{k=1}^n \sigma_k\ep_k  } +\widetilde G_{n0}} {\int\limits_{0}^{T}(T-s)^{\al-1} R(s)ds} \Bigg]^2 + \sum\limits_{p=1}^{M} \Bigg[\frac{{\frac{\pi }{n}\sum\limits_{k = 1}^n \sigma_k {{\varepsilon _k}{\phi _p}({x_k})}  + {\widetilde {G}_{np}}} }{\int\limits_{0}^{T}(T-s)^{\al-1}E_{\al,\al}(-p^2 (T-s)^{\al}) R(s)ds } \Bigg]^2\nonumber \\
	&+\sum_{p=M+1}^\infty\Bigg[  \frac{\Big<u_T(x),\phi_{p}(x)\Big>}{\int\limits_{0}^{T}(T-s)^{\al-1}E_{\al,\al}(-p^2 (T-s)^{\al}) R(s)ds}\Bigg]^2  \nonumber
	\end{align}
	This follows from  the Parseval identity and the  fact that $\mathbb{E}(\epsilon_j \epsilon_l) = 0$; $(j \ne l)$, and $\mathbb{E}\epsilon_j = 0; j = 1,2, \ldots ,n$.  Now
	
	\begin{align}  \label{upper-eq1}
	&\mathbb{E}	\left\|	\widetilde {f}_{n,M}(x) - f(x)\right\|^2_{L^2(\Omega)}\nn\\
	 &=\underbrace {\frac{\dfrac{1}{n^2}\sum_{k=1}^n \sigma_k^2 \mathbb{E}\epsilon_k^2 + \widetilde G_{n0}^2} {  \left( \int_{0}^{T}(T-s)^{\alpha-1}R(s)ds \right)^2 } }_{=: I_1}
	+\underbrace{ \sum_{p=M+1}^\infty \left[ {\dfrac{\langle u_T(x),\phi_p(x) \rangle}{\int\limits_{0}^{T}(T-s)^{\al-1}E_{\al,\al}(-p^2 (T-s)^{\al}) R(s)ds}} \right]^2 }_{=:I_2} \nonumber \\
	& +\underbrace{ \sum_{p=1}^M  \frac{\dfrac{\pi^2}{n^2}\sum_{k=1}^n\sigma_k^2 \mathbb{E}\epsilon_k^2 + \widetilde G_{np}^2 }{\bigg[\int\limits_{0}^{T}(T-s)^{\al-1}E_{\al,\al}(-p^2 (T-s)^{\al}) R(s)ds\bigg]^2} }_{=:I_3}.
	\end{align}
	First, by  \eqref{useful2} and \eqref{useful3} we know that
	\begin{equation}
	\widetilde G_{n0} \le \sqrt{\dfrac{2}{\pi}}\sum_{l=1}^\infty \left| {\langle u_T\left( x \right),{\phi _{2ln}}(x)\rangle } \right|
	\le \sqrt {\frac{2}{\pi }} \sum\limits_{l = 1}^\infty  \frac{ \|R\|_{\infty}  \|f\|_{L^2(\Omega)}  }{4l^2 n^2} =\sqrt {\frac{2}{\pi }} \frac{\pi^2}{24} \frac{ \|R\|_{\infty} \|f\|_{L^2(\Omega)}  }{n^2} .
	\end{equation}
	where we use the fact that $\sum_{l=1}^{\infty} \frac{1}{l^2}= \frac{\pi^2}{6}$.
	By a similar method  as above, we  obtain
	\begin{align}
	\widetilde G_{np} &\le \sum_{l=1}^\infty  \Bigg|  \Big \langle u_T\left(x\right), \phi_{p+2ln}(x) \Big\rangle + \Big \langle u_T\left(x\right), \phi_{-p+2ln}(x) \Big\rangle\Bigg|\nn\\
	&\le   {\|R\|_{\infty} \|f\|_{L^2(\Omega)} }  \Bigg[\sum_{l=1}^\infty \frac{1}{(p+2ln)^2} +\sum_{l=1}^\infty \frac{1}{(-p+2ln)^2}\Bigg]\nn\\
	& \le \frac{\pi^2}{12}    \frac{{\|R\|_{\infty} \|f\|_{L^2(\Omega)} }}{n^2}.
	\end{align}
	Since  $\sigma_k < V_{\max}$, we estimate $I_1$ as follows
	\begin{align}
	I_1 \le \frac{ \dfrac{ V^2_{\max}}{n} + \widetilde G_{n0}^2 }{ \pi^2 \left( \int_{0}^{T}(T-s)^{\alpha-1}R(s)ds \right)^2  } \le \frac{(2-\al)^2}{ R_0 ^2 T^{4-2\al}} \left(\dfrac{ V^2_{\max}}{n} + \frac{\pi^3}{288} \frac{{{\left\| R \right\|}_{\infty}^2}\left\| f \right\|^2_{L^2(\Omega)} }{n^4}\right) .\label{max-eq2}
	\end{align}

 {	By equation  \eqref{f-bound-eq1}, we know that for $p \ge 1$
	\begin{equation}
	<f(x), \phi_p(x)>=	 {\dfrac{\langle u_T(x),\phi_p(x) \rangle}{\int\limits_{0}^{T}(T-s)^{\al-1}E_{\al,\al}(-p^2 (T-s)^{\al}) R(s)ds}}.
	\end{equation}
}	

%	{Are you OK ?}

 {
Recall  the definition of $I_2$ in \eqref{upper-eq1}:
\begin{align}
I_2=\sum_{p=M+1}^\infty \left[ {\dfrac{\langle u_T(x),\phi_p(x) \rangle}{\int\limits_{0}^{T}(T-s)^{\al-1}E_{\al,\al}(-p^2 (T-s)^{\al}) R(s)ds}} \right]^2 .
\end{align}
}
%	{Are you OK ?}

	 {
	using the last two equations,  we get
		\begin{align}
		I_2=\sum_{p=M+1}^\infty \Big[<f(x), \phi_p(x)> \Big]^2 .
		\end{align}
	}
	
	%{Are you OK ?}	
	
		 {
		Since  $1=p^{-2\beta} p^{2\beta} $, we can rewrite $I_2$ as follows
			\begin{equation}\label{i2-sum}
			I_2=\sum_{p=M+1}^\infty p^{-2\beta} p^{2\beta}\Big|<f(x), \phi_p(x)> \Big|^2.
			\end{equation}
		}

	%	{Are you OK ?}

		 {
In the last series \eqref{i2-sum}  since $p \ge M+1 >M$, we get $p^{-2\beta} \le M^{-2\beta}$.  	
}

%	{Are you OK ?}

 {
Using the last two  observations, we obtain
\begin{equation}\label{i2-tilde-1}
I_2 \le \sum_{p=M+1}^\infty  M^{-2\beta} p^{2\beta}\Big|<f(x), \phi_p(x)> \Big|^2 \phi_p^2(x)= M^{-2\beta} \underbrace{\sum_{p=M+1}^\infty p^{2\beta}\Big|<f(x), \phi_p(x)> \Big|^2  }_{:=\widetilde I_2}.
\end{equation}
}

%	{Are you OK ?}
	
 {	
	It is easy to see that
	\bes\label{i2-tilde-2}
	\widetilde I_2 = \sum_{p=M+1}^\infty p^{2\beta}\Big|<f(x), \phi_p(x)> \Big|^2  \le \sum_{p=1}^\infty p^{2\beta}\Big|<f(x), \phi_p(x)> \Big|^2 = \left\|f \right\|_{H^\beta(\Omega)}^2.
	\ens
}	
	%	{Are you OK ?}

 {	
	Using \eqref{i2-tilde-1} and \eqref{i2-tilde-2}, we get
	\begin{align}
	I_2 &\le   M^{-2\beta} \left\|f \right\|_{H^\beta(\Omega)}^2.
	\label{I2-eq3}
	\end{align}
	 }
	%{Are you OK ?}

%~\\\\\
	Using Lemma \ref{mittag-leffler-bound},  the $I_3$ term  can be estimated as follows
	\begin{align}
	I_3 &\le \Bigg(\dfrac{\pi^2 V^2_{\max}}{n}+  \frac{\pi^4}{144}    \frac{{\|R\|_{\infty}^2 \|f\|^2_{L^2(\Omega)}  }}{n^4}\Bigg) \sum_{p=1}^M  {\bigg[\int\limits_{0}^{T}(T-s)^{\al-1}E_{\al,\al}(-p^2 (T-s)^{\al}) R(s)ds\bigg]^{-2}}  \nn\\
	&\le\Bigg(\dfrac{\pi^2 V^2_{\max}}{n}+  \frac{\pi^4}{144}    \frac{{\|R\|_{\infty}^2 \|f\|^2_{L^2(\Omega)}  }}{n^4}\Bigg) \sum_{p=1}^M \frac{  p^4 }{ R_0^2 [1 - E_{\al,1}(-T^{\al}) ]^2 } \nn\\
	&\le\frac{1} { R_0^2 [1 - E_{\al,1}(-T^{\al}) ]^2 } \Big(\dfrac{\pi^2 V^2_{\max}}{n}+    \frac{\pi^4{\|R\|_{\infty}^2 \|f\|^2_{L^2(\Omega)}  }}{144 n^4}\Big) M^5. \label{I3-eq4}
	\end{align}
	
	Combining \eqref{upper-eq1}, \eqref{max-eq2}, \eqref{I2-eq3}, \eqref{I3-eq4}, we obtain
	\begin{align}
	\mathbb{E}	\left\|	\widetilde {f}_{n,M}(x) - f(x)\right\|^2_{L^2(\Omega)} &\le  \frac{(2-\al)^2}{ R_0^2 T^{4-2\al}} \left(\dfrac{\pi^2 V^2_{\max}}{n} + \frac{\pi^3}{288} \frac{{{\left\| R \right\|}_{\infty}^2}\left\| f \right\|^2_{L^2(\Omega)}  }{n^4}\right)\nn\\
	&+\frac{1} { R_0^2 [1 - E_{\al,1}(-T^{\al}) ]^2 } \Big(\dfrac{\pi^2 V^2_{\max}}{n}+    \frac{\pi^4{\|R\|_{\infty}^2 \|f\|^2_{L^2(\Omega)}   }}{144 n^4}\Big) M^5\nn\\
	&+M^{-2\beta} \left\|f \right\|_{H^\beta}^2.
	\end{align}
	This completes the proof.
\end{proof}

\section*{Acknowledgment}
The first author  gratefully acknowledge stimulating discussions with
 Prof Dang Duc Trong.

\end{document}